\DeclareMathOperator{\lc}{lc}
\DeclareMathOperator{\val}{val}
\DeclareMathOperator{\tin}{t-in}
\DeclareMathOperator{\Trop}{Trop}
\DeclareMathOperator{\tsupp}{t-supp}
\newcommand{\M}{\mathbbm{M}}
\newcommand{\lang}[1]{}
\newcommand{\kurz}[1]{#1}
\newcommand{\bmath}{\kurz{\begin{math}}\lang{\begin{displaymath}} }
\newcommand{\emath}{\kurz{\end{math}}\lang{\end{displaymath}} }
\begin{document}

   \parindent0cm

   \title[Generalised Puiseux Series]{A Field of Generalised Puiseux Series for Tropical Geometry}
   \author{Thomas Markwig}
   \address{Universit\"at Kaiserslautern\\
     Fachbereich Mathematik\\
     Erwin--Schr\"odinger--Stra\ss e\\
     D --- 67663 Kaiserslautern\\
     Tel. +496312052732\\
     Fax +496312054795
     }
   \email{keilen@mathematik.uni-kl.de}
   \urladdr{http://www.mathematik.uni-kl.de/\textasciitilde keilen}
   \thanks{The author was supported by the IMA, Minneapolis.}

   \subjclass{Primary 12J25, 16W60, 13F25, 13F30}

   \date{September, 2007.}

   \keywords{Tropical geometry, valuation, algebraically closed field}
     
   \begin{abstract}
     In this paper we define a field $\K$  of characteristic zero with valuation
     whose value group is $(\R,+)$, and we show that this field of
     \emph{generalised Puiseux series} is
     algebraically closed and complete with respect to the norm
     induced by its valuation. We consider this field to be a good candidate for 
     the base field for tropical geometry.
   \end{abstract}

   \maketitle

   In order to study the geometric properties of a variety, say
   $V=V(I)\subseteq(\C^*)^n$ with
   $I\lhd\C[x_1^{\pm 1},\ldots,x_n^{\pm 1}]$, it is common to study as well
   deformations of the variety respectively of its defining equations,
   i.e.\ we replace the ideal $I$ 
   by an ideal $I_t\lhd\C[[t]][x_1^{\pm 1},\ldots,x_n^{\pm 1}]$ such that
   $I=\{f_{|t=0}\;|\;f\in I_t\}$. The generic fibre of this family is
   then defined by the ideal which $I_t$ generates over the quotient
   field $\Quot\big(\C[[t]])$ of the power series ring
   $\C[[t]]$. Unfortunately, this field is not algebraically
   closed. If we are interested in the geometric properties of
   the general fibre it thus is natural to pass to the algebraic closure
   of this field, which is the field
   \begin{displaymath}
     \C\{\{t\}\}=\bigcup_{N=1}^\infty\C\Big[\Big[t^{\frac{1}{N}}\Big]\Big]
     =\left\{\sum_{k=m}^\infty a_k\cdot t^\frac{k}{N}\;\Big|\; m\in\Z,
       N\in\N, a_k\in\C\right\}
   \end{displaymath}
   of Puiseux series over $\C$. This field comes with a valuation
   \begin{displaymath}
     \val:\C\{\{t\}\}^*\longrightarrow\Q:\sum_{k=m}^\infty a_k\cdot
     t^\frac{k}{N}\mapsto \min\left\{\frac{k}{N}\;\Big|\;a_k\not=0\right\}
   \end{displaymath}
   sending a Puiseux series to its order. Given an ideal
   $J\lhd\C\{\{t\}\}[x_1^{\pm 1},\ldots,x_n^{\pm 1}]$ and its variety
   $V(J)\subseteq (\C\{\{t\}\}^*)^n$ the idea of tropical geometry is to
   try to understand  $V(J)$ better by just looking at its image
   under  the $n$-fold Cartesian product of the valuation map
   \begin{displaymath}
     \val:(\C\{\{t\}\}^*)^n\longrightarrow
     \Q^n:(p_1,\ldots,p_n)\mapsto\big(\val(p_1),\ldots,\val(p_n)\big),
   \end{displaymath}
   or rather its closure, say
   $\Trop\big(V(J)\big)$, in $\R^n$ under the Euclidean topology. 
   (Depending on whether they prefer $\max$ over $\min$ people sometimes
   use the negative of this function $\val$ for the process of
   tropicalisation.)  Due to  the Theorem of Bieri-Groves (see
   \cite[Thm.~A]{BG84}, \cite[Thm.~2.4]{SS04a}, \cite[Thm.~2.2.5]{EKL06}) and
   the Lifting Lemma (see \cite[Thm.~2.13]{EKL06},  
   \cite[Thm.~2.1]{SS04a}, \cite[Thm.~4.2]{Dra06},
   \cite[Thm.~2.13]{JMM07})  this object turns out to be piece 
   wise linear and its points, say $\omega$, can be characterised by the
   fact that the $t$-initial ideal of $J$ with respect to $\omega$ is
   monomial free (see e.g.\ \cite{JMM07}). Taking into account how
   crude the valuation map is, that is, how much information it
   ignores  (e.g.\ \cite[Thm.~4.2]{Pay07} shows that each fibre of the
   restriction of $\val$ to $V(J)$ is dense in $V(J)$ as soon as it is
   non-empty), it is  surprising how much valuable information is
   preserved (see e.g.\ \cite{EKL06}, \cite{Spe05}, \cite{Tab05a}, \cite{Dra06},
   \cite{Gat06}, \cite{Mik06}, \cite{Shu06}, \cite{KMM07}, \cite{Boe07}). 

   Forgetting about the motivation why the field $\C\{\{t\}\}$ should
   be an interesting field to start with, one can replace
   $\C\{\{t\}\}$ by any field $K$ with a valuation whose value group
   is dense in $\R$ with respect to the Euclidean topology, and study
   the tropicalisation of varieties $(K^*)^n$ via the $n$-fold Cartesian
   product of the valuation map. The Lifting Lemma holds in any
   case (see \cite[Thm.~2.1]{SS04a}, \cite[Thm.~4.2]{Dra06}), and it
   seems somehow more natural to choose a 
   field where the valuation map is surjective onto $\R$, so
   that $\Trop\big(V(J)\big)$ coincides with
   $\val\big(V(J)\big)$ and no topological closure is
   necessary, which also leads to a larger class of tropical
   varieties, e.g.\ points with non-rational coordinates. To
   this extend other authors (see e.g. \ \cite{Pay07},
   \cite[Chap.~4.2]{Boe07}) use the  
   following field of a generalised Laurent series,
   \begin{displaymath}
     K=\left\{\sum_{\alpha\in A} a_\alpha\cdot t^\alpha\;\Big|\;
     A\subset\R \mbox{ well-ordered}, a_\alpha\in\C\right\},
   \end{displaymath}
   with the obvious addition and multiplication, and where the
   valuation of generalised Laurent series is again given by its
   order. This field is indeed algebraically closed and complete (see
   \cite[Thm.~2]{Ray74}),  and
   its value group is $\R$. However, it seems a rather big step
   from the field $\C\{\{t\}\}$ to this field $K$ by passing to 
   exponent sets $A$ which are arbitrary well-ordered sets. In
   this paper we want to introduce an alternative field $\K$ which contains
   $\C\{\{t\}\}$ and is contained in $K$, which has a valuation with
   value group $\R$ and which is also algebraically closed and complete. In
   comparison with $\C\{\{t\}\}$ it thus has the advantage of
   completeness and that no
   topological closure is necessary when tropicalising, and in
   comparison with $K$ it has the advantage that the exponents of the
   generalised Laurent series considered are simply sequences of real
   numbers diverging to infinity. 

   \begin{definition}\label{def:K}
     \begin{enumerate}
     \item We use the symbol
       \begin{displaymath}
         \alpha_n\nearrow\infty
       \end{displaymath}
       to denote a sequence $(\alpha_n)_{n\in\N}$ of real numbers which
       is strictly monotonously increasing and unbounded, and we call
       the sequence \emph{smiub}. Note that
       such a sequence is determined uniquely by the set
       $\{\alpha_n\;|\;n\in \N\}$. 
     \item  We define the set $\M$ to be
       \begin{displaymath}
         \M=\big\{\{\alpha_n\;|\;n\in\N\}\;\big|\;\alpha_n\nearrow\infty\big\}
         \cup\{A\;|\;A\subset\R, \# A<\infty\},
       \end{displaymath}
       which is basically the union of all \emph{smiub}-sequences and
       of all finite sequences.
     \item Given a set $A\in\M$ and $a_\alpha\in\C^*$ for $\alpha\in
       A$ we use the short hand notation
       \begin{equation}\label{eq:K:1}
         \sum_{\alpha\in A} a_{\alpha}\cdot t^{\alpha}
       \end{equation}
       in order to denote the function
       \begin{displaymath}
         f:\R\longrightarrow\C:\alpha\mapsto
         \left\{
         \begin{array}[m]{ll}
           a_{\alpha}, & \mbox{ if } \alpha\in A,\\
           0,&\mbox{ else,}
         \end{array}
         \right.
       \end{displaymath}
       and we call $A$ the \emph{support} of $f$.
       The set of all function $f:\R\rightarrow\C$ of this type is
       denoted by $\K$, i.e.
       \begin{displaymath}
         \K =\left\{\sum_{\alpha\in A}a_{\alpha}\cdot
             t^{\alpha}\;\Big|\;A\in\M\right\}.
       \end{displaymath}
       Note that we allow the set $A$ to be empty, so that the constant
       zero function is contained in $\K$. We call the elements of
       $\K$ \emph{generalised Puiseux series}.
     \item If $A,B\subset\R$ we set $A*B=\{\alpha+\beta\;|\;\alpha\in A,\beta\in B\}$. 
     \end{enumerate}
   \end{definition}

   \begin{remark}
     \begin{enumerate}
     \item Note that the representation \eqref{eq:K:1} of a function
       $f\in\K$ is unique, and it is either a generalised Laurent polynomial
       \begin{displaymath}
         f=\sum_{n=0}^k a_{\alpha_n}\cdot t^{\alpha_n},
       \end{displaymath}
       or it is a generalised Laurent series
       \begin{displaymath}
         f=\sum_{n=0}^\infty a_{\alpha_n}\cdot t^{\alpha_n},
       \end{displaymath}
       where the exponents are real numbers forming a
       \emph{smiub}-sequence $\alpha_n\nearrow\infty$. In particular
       with the notation from above we obviously have
       \begin{displaymath}
         \C\{\{t\}\}\;\;\subset\K\subset\;\; K.
       \end{displaymath}
       We will, however, not bother too much about the \emph{uniqueness} of
       the representation and spoil it by allowing the coefficients to
       be zero in order make the notation simpler.
     \item For $A,B\in\M$ one easily sees that $A\cup B\in\M$ and
       $A*B\in\M$. Moreover, for any fixed element $\gamma\in A*B$
       there is only a finite number of pairs $(\alpha,\beta)\in A\times B$
       such that $\alpha+\beta=\gamma$.
     \item Sometimes we will have to access the value of a function
       $f\in\K$ for $\alpha=0$ where $f$ is given as an algebraic expression
       involving several elements of $\K$. We then will use the short
       hand notation
       \begin{displaymath}
         f_{|t=0}=f(0).
       \end{displaymath}
     \end{enumerate}
   \end{remark}

   \begin{definition}\label{def:operations}
     For $f=\sum_{\alpha\in A}a_\alpha\cdot t^\alpha,\; g=\sum_{\beta\in
       B}b_\beta\cdot t^\beta\in\K$ we define the functions 
     \begin{displaymath}
       f+g:\R\longrightarrow\C:\alpha\mapsto f(\alpha)+g(\alpha)
     \end{displaymath}
     and 
     \begin{displaymath}
       f\cdot g=\sum_{\gamma\in A*B} \left(\sum_{\alpha\in A,\beta\in B\;:\;\alpha+\beta=\gamma}
       a_\alpha\cdot b_\beta\right) \cdot t^\gamma.
     \end{displaymath}
   \end{definition}

   \begin{remark}
     With the notation of Definition \ref{def:operations} we obviously
     have that
     \begin{displaymath}
       f+g=\sum_{\gamma\in A\cup B} (a_\gamma+b_\gamma)\cdot t^\gamma,
     \end{displaymath}
     if we use the convention that $a_\gamma=0$ if $\gamma\not\in A$
     and $b_\gamma=0$ whenever $\gamma\not\in B$, and both $f+g$ and
     $f\cdot g$ are elements of $\K$.
     In particular, $(\K,+,\cdot)$ is a subfield of $K$. Moreover, the
     valuation on $K$ induces the valuation
     \begin{displaymath}
       \val:(\K^*,\cdot)\longrightarrow(\R,+):f\mapsto\min\{\alpha\in\R\;|\;f(\alpha)\not=0\}
     \end{displaymath}
     on $\K$, i.e.\ $\val$ is a group homomorphism such that
     \begin{displaymath}
       \val(f+g)\geq\min\{\val(f),\val(g)\}
     \end{displaymath}
     for $f,g\in\K^*$. We call $\lc(f)=f\big(\val(f)\big)$ the leading
     coefficient of $f$, and as usual we extend the valuation to the whole
     of $\K$ by $\val(0)=\infty$. 
   \end{remark}

   \begin{remark}
     If $f_0,\ldots,f_n\in\K$ with $\val(f_i)>0$ for all
     $i=0,\ldots,n$ and $G\in\C[[z_0,\ldots,z_n]]$ is a formal power
     series, then we may actually substitute $z_i$ by $f_i$ in order
     to receive an element $G(f_0,\ldots,f_n)$ in $\K$.
   \end{remark}

   The aim of this paper is to prove the following theorem.

   \begin{theorem}\label{thm:main}
     The field $\K$ is algebraically closed.
   \end{theorem}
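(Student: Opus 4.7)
The plan is to adapt the classical Newton--Puiseux construction to the real-exponent setting, producing a root by an iterative Newton-polygon argument and then verifying that the result actually lies in $\K$ (rather than merely in the ambient field $K$ of generalised Laurent series).

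After reducing to the case $P(X)=X^n+a_{n-1}X^{n-1}+\cdots+a_0\in\K[X]$ monic with $a_0\neq 0$ (otherwise $0$ is a root), I would form the \emph{Newton polygon} of $P$, namely the lower convex hull in $\R^2$ of $\{(i,\val(a_i)) : a_i\neq 0\}$, and read off its leftmost edge: the slope $-\rho_1$ and the associated reduced polynomial $h_1(z)\in\C[z]$ assembled from the leading coefficients $\lc(a_i)$ along that edge. Because $\C$ is algebraically closed, $h_1$ has a root $c_1\in\C^*$, and I set $\alpha_1:=c_1 t^{\rho_1}\in\K$. A direct expansion using Definition~\ref{def:operations} then shows that the constant term of $P(\alpha_1+Y)\in\K[Y]$ has strictly larger valuation than $\val(P(0))$.

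The iteration is then straightforward: given $s_m:=\sum_{j=1}^m\alpha_j\in\K$ with $\val(\alpha_j)=\rho_j$ and $\rho_1<\cdots<\rho_m$, I form $P_m(Y):=P(s_m+Y)\in\K[Y]$ (which lies in $\K[Y]$ since $\K$ is a ring), apply the same Newton-polygon analysis to $P_m$ to extract $\rho_{m+1}>\rho_m$, a root $c_{m+1}\in\C^*$ of the associated $h_{m+1}(z)\in\C[z]$, and set $\alpha_{m+1}:=c_{m+1}t^{\rho_{m+1}}$. If at some stage the constant term of $P_m$ vanishes, then $s_m$ is already a root; otherwise the construction produces an infinite sequence $(\alpha_m)_{m\in\N}$.

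The main obstacle --- and the step that genuinely differs from the classical proof --- will be to show that $\rho_m\nearrow\infty$, since without this the formal sum $\alpha:=\sum_m\alpha_m$ need not lie in $\K$, only in $K$. In the classical Puiseux setting the $\rho_m$ live in a common rational lattice $\tfrac{1}{N}\Z$ determined by the ramification of $P$, which makes divergence automatic; here no denominator exists. My strategy is to track a strictly decreasing discrete invariant of the reduction --- essentially the multiplicity of the chosen root $c_m$ as a zero of $h_m(z)$, together with the length of the relevant edge --- which is bounded by $n=\deg P$ and which forces the iteration to enter a \textbf{Hensel regime} after finitely many steps in which $h_m$ is linear. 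In that regime a single simple edge of the Newton polygon of $P_m$ controls everything, and standard Hensel-type estimates give a bound of the form $\val(P_{m+1}(0))\geq 2\val(P_m(0))-C$ for a constant $C$ depending only on $P$, so that the $\rho_m$ grow at least geometrically and diverge.

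Once $\rho_m\nearrow\infty$ is established, the set $\{\rho_m : m\in\N\}$ lies in $\M$, so $\alpha\in\K$; and the continuity estimate $\val\bigl(P(\alpha)-P(s_m)\bigr)\geq\val(\alpha-s_m)=\rho_{m+1}$ combined with $\val\bigl(P(s_m)\bigr)\to\infty$ (read off directly from the chosen edges) yields $P(\alpha)=0$, completing the proof.
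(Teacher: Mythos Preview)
Your overall Newton--Puiseux strategy is sound, and you have correctly identified the crux of the argument: proving $\rho_m\nearrow\infty$ so that the candidate root lies in $\K$ and not merely in $K$. However, your proposed resolution of that obstacle has a genuine gap. The claim that the multiplicity of $c_m$ in $h_m$ is part of a \emph{strictly} decreasing invariant, eventually forcing the process into a Hensel regime where $h_m$ is linear, is false. Take $P(y)=(y-g)^n$ with $g=\sum_{k\geq 1}t^k\in\K$: at every step the initial form is $(y-1)^n$, the multiplicity stays equal to $n$, and no Hensel regime is ever entered. In general the multiplicity sequence is only non-increasing and can stabilise at any value $r$ with $1\leq r\leq n$; once it stabilises at $r\geq 2$ your geometric growth estimate $\val(P_{m+1}(0))\geq 2\val(P_m(0))-C$ is unavailable, and you have no mechanism left to force divergence of the $\rho_m$.

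The paper closes this gap by a quite different idea. Once the multiplicity stabilises at some $r$, it invokes the Weierstra{\ss} Preparation Theorem to replace the degree-$n$ polynomial by a monic degree-$r$ factor, reducing to the case $r=n$. In that fully ramified situation the initial form at each step is a constant times $(y-c_\nu)^n$, and comparing the $y^{n-1}$-coefficients through the substitution yields the recursion $f_{n-1,\nu}=-n\,c_\nu\,t^{\omega_\nu}+t^{\omega_\nu}f_{n-1,\nu+1}$. Telescoping, one finds that the candidate root equals $-f_{n-1,0}/n$ \emph{exactly}. Since $f_{n-1,0}\in\K$ from the start, membership in $\K$ (and hence divergence of the exponents) is automatic, with no Hensel estimate needed; characteristic zero enters precisely in the division by $n$. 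You will need either this Vieta-type identification or some substitute for it---the Hensel argument alone does not suffice.
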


   We do neither claim any originality for the definition of the field, nor for the
   fact that it is algebraically closed. In fact, the field can be
   viewed as a special case of much wider classes of fields studied in
   \cite{Ray74} respectively in \cite{Rib92}, and they also show that
   the fields in question are algebraically
   closed. \cite[Thm.~2]{Ray74} and \cite[(5.2)]{Rib92} both reduce
   this fact to general results in the ramification theory of
   non-archimedian valued fields. We want to present a 
   different proof. The basic idea is as follows: Given a non-constant
   polynomial over $\K$ we have to find a root. Using the Weierstra{\ss}'
   Preparation Theorem (see e.g.\ \cite[Kap.\ I,\§ 4]{GR71}) we reduce to the situation
   where the $t$-Newton  polygon (see Notation \ref{not:newton}) has
   only a single lower face connecting the two coordinate 
   axes, and to this polynomial we apply an adaptation of 
   the classical Newton-Puiseux algorithm (see e.g.\
   \cite[Thm.~5.1.14]{DP00}). The
   idea for the reduction step is due to Marina Viazovska.

   Let us fix some notation before we start with the actual
   proof. 

   \begin{notation}\label{not:newton}
     Let $F=\sum_{i=0}^n f_i\cdot y^i\in\K[y]$ be a polynomial of
     degree $n$. We define the \emph{$t$-support} of $F$ as the set
     \begin{displaymath}
       \tsupp(F)=\big\{\big(i,\val(f_i)\big)\;|\;i=0,\ldots,n,\;
       f_i\not=0\big\}\subset\R^2,
     \end{displaymath}
     and we call the convex hull, say $N(F)$, of $\tsupp(F)$ the
     \emph{$t$-Newton polygon} of $F$.
     
     Assume now that $\val(f_i)\geq 0$ for all $i$ and 
     fix a real number $\omega$. We then call
     \begin{displaymath}
       \ord_\omega(F)=\min\{\val(f_i)+\omega\cdot i\;|\;f_i\not=0\}
     \end{displaymath}
     the \emph{$\omega$-order} of $F$, and we define the \emph{$t$-initial form} $F$
     with respect to $\omega$ as
     \begin{displaymath}
       \tin_\omega(F)=\sum_{i\;:\;\val(f_i)+\omega\cdot i=\ord_\omega(F)}
       \lc(f_i)\cdot y^i\in \C[y].
     \end{displaymath}
   \end{notation}

   \begin{example}
     Consider the polynomial
     \begin{displaymath}
       F=\big(2t+t^{\frac{4}{3}}\big)\cdot y^6+y^5+\frac{1}{1-t^{\frac{1}{2}}}\cdot y^4
       -t^{\pi}\cdot y^3+t\cdot y^2+\big(t^{\frac{4e}{5}}-t^4\big)\cdot y+3t^{\frac{5}{2}}\in\K[y].
     \end{displaymath}
     The $t$-support of $F$ is
     \begin{displaymath}
       \tsupp(F)=\left\{\left(0,2.5\right),\left(1,0.8\cdot e\right),
         \left(2,1\right),\left(3,\pi\right),\left(4,0\right),\left(5,0\right),
         \left(6,1\right)\right\},
     \end{displaymath}
     and thus the $t$-Newton polygon $N(F)$ of $F$ looks as follows:
     \medskip
     \begin{center}
       \begin{texdraw}
         \drawdim cm  \relunitscale 0.5 \arrowheadtype t:V
         \linewd 0.06  \lpatt (1 0)       
         \setgray 0.6
         \relunitscale 1.5
         \move (0 0) \avec (7 0) \move (0 0) \avec (0 4.5)
         \htext (0.3 1.2){$\Delta_1$}
         \htext (2.2 0.2){$\Delta_2$}
         \htext (4.2 0.1){$\Delta_3$}
         \htext (5.6 0.2){$\Delta_4$}
         \htext (2.5 1.5) {$N(F)$}
         \setgray 0
         \move (0 2.5) \lvec (2 1) \lvec (4 0) \lvec (5 0) \lvec (6 1)
         \lvec (3 3.14) \lvec (0 2.5)
         \move (0 2.5) \fcir f:0 r:0.1
         \move (1 2.17) \fcir f:0 r:0.1
         \move (2 1) \fcir f:0 r:0.1
         \move (3 3.14) \fcir f:0 r:0.1
         \move (4 0) \fcir f:0 r:0.1
         \move (5 0) \fcir f:0 r:0.1
         \move (6 1) \fcir f:0 r:0.1
       \end{texdraw}
     \end{center}
     \medskip
     It has four lower faces $\Delta_1$, $\Delta_2$, $\Delta_3$, and
     $\Delta_4$, and the slope of $\Delta_1$ is $-\frac{3}{4}$. If we
     choose $\omega=\frac{3}{4}$ then $\ord_\omega(F)=\frac{5}{2}$ 
     and $\tin_\omega(F)=y^2+3$.
   \end{example}

   \begin{proof}[Proof of Theorem \ref{thm:main}]
     Consider a non-constant polynomial
     \begin{displaymath}
       F_0=\sum_{i=0}^n f_{i,0}\cdot y^i \in \K[y]
     \end{displaymath}
     with coefficients $f_{i,0}\in\K$. We have to show 
     that there is a $\overline{y}\in\K$ such that
     $F_0(\overline{y})=0$. For this we first want to show that we
     may assume that the coefficients of $F_0$ satisfy certain
     assumptions. 
     
     If $f_{0,0}=0$ then $\overline{y}=0$ will do, so
     that we may assume 
     \begin{equation}\label{eq:cond:1}
       f_{0,0}\not=0.
     \end{equation}
     Multiplying $F_0$ by $t^{-\min\{\val(f_{i,0})\;|\;i=0,\ldots,n\}}$ does not change
     the set of roots of $F_0$ but it allows us to assume that
     \begin{equation}\label{eq:cond:2}
       \val(f_{i,0})\geq 0 \;\;\;\mbox{ for all }\;\;\;i=1,\ldots,n
     \end{equation}
     and that the minimum 
     \begin{displaymath}
       r=\min\{i\;|\;\val(f_{i,0})=0\}
     \end{displaymath}
     exists. 

     We claim that we may actually assume 
     \begin{equation}\label{eq:cond:3}
       r\geq 1.
     \end{equation}
     Suppose the contrary, i.e.\
     $r=0$. If $\val(f_{n,0})>0$ then we can replace $F_0$ by $G=y^n\cdot
     F_0\big(\frac{1}{y}\big)=\sum_{i=0}^n f_{n-i,0}\cdot y^i\in
     \K[y]$ which is a polynomial whose constant coefficient has
     positive valuation, and if we find a root $y'$ of $G$, necessarily
     non-zero, then $\overline{y}=\frac{1}{y'}$ is a root of $F_0$. If
     instead also $\val(f_{n,0})=0$ this replacement would not
     help. However, in this situation 
     \begin{displaymath}
       h={F_0}_{|t=0}=\sum_{i=0}^n f_{i,0}(0)\cdot y^i\in \C[y]
     \end{displaymath}
     is a polynomial of degree $n$ with non-zero constant term. Since
     $\C$ is algebraically closed there is a $0\not=c\in \C$ such
     that $h(c)=0$. If we then set 
     \begin{displaymath}
       G=F_0(y+c)=\sum_{i=0}^n\left(\sum_{j=i}^n
         f_{j,0}\cdot\binom{j}{i}\cdot c^{j-i}\right)\cdot y^i,
     \end{displaymath}
     the constant coefficient, say $g_0\in \K$, of this polynomial
     satisfies $g_0(0)=h(c)=0$ and has thus positive valuation. We may
     again replace $F_0$ by $G$, and if $y'$ is a root of $G$
     then $\overline{y}=y'+c$ is a root of $F_0$. This shows the
     claim. 

     We are now ready to show by induction on $n+r$ that a
     polynomial $F_0$ satisfying the conditions \eqref{eq:cond:1},
     \eqref{eq:cond:2} and \eqref{eq:cond:3} has a root
     $\overline{y}\in\K$ such that $\val(\overline{y})> 0$.

     If $n=r=1$ there is nothing to show, and we may assume that $n>1$.

     Due to the above assumptions the $t$-Newton polygon of $F_0$
     looks basically as follows:
     \begin{center}
       \begin{texdraw}
         \drawdim cm  \relunitscale 0.5 \arrowheadtype t:V
         \linewd 0.06  \lpatt (1 0)       
         \setgray 0.6
         \relunitscale 1.5
         \move (0 0) \avec (10 0) \move (0 0) \avec (0 6)
         \htext (0.3 2.4){$\Delta_0$}
         \htext (4.2 1.5) {$N(F_0)$}
         \setgray 0
         \move (0 4) \lvec (2 1.5) \lvec (6 0) \lvec (7 0) \lvec (9 1)
         \lvec (5 3) \lvec (1 4.5) \lvec (0 4)
         \move (0 4) \fcir f:0 r:0.1
         \move (0.5 4.25) \fcir f:0 r:0.1
         \move (1 4.5) \fcir f:0 r:0.1
         \move (1.5 3.5) \fcir f:0 r:0.1
         \move (2 1.5) \fcir f:0 r:0.1
         \move (2.5 2.7) \fcir f:0 r:0.1
         \move (3 3) \fcir f:0 r:0.1
         \move (3.5 2) \fcir f:0 r:0.1
         \move (4 0.75) \fcir f:0 r:0.1
         \move (4.5 2.75) \fcir f:0 r:0.1
         \move (5 3) \fcir f:0 r:0.1
         \move (5.5 1) \fcir f:0 r:0.1
         \move (6 0) \fcir f:0 r:0.1
         \move (6.5 1.3) \fcir f:0 r:0.1
         \move (7 0) \fcir f:0 r:0.1
         \move (7.5 0.5) \fcir f:0 r:0.1
         \move (8 1.3) \fcir f:0 r:0.1
         \move (8.5 0.75) \fcir f:0 r:0.1
         \move (9 1) \fcir f:0 r:0.1        
         \htext (-3.5 3.7){$\alpha_0=\val(f_{0,0})$}
         \htext (-2.7 1.2){$\val(f_{k,0})$}
         \htext (1.8 -0.6){$k$}
         \htext (5.8 -0.6){$r$}
         \htext (8.8 -0.6){$n$}
         \setgray 0.5
         \lpatt (0.1 0.4) 
         \move (2 0) \lvec (2 1.5) \lvec (0 1.5)
         \move (9 0) \lvec (9 1)         
       \end{texdraw}
     \end{center}
     Here we simply set $\alpha_0=\val(f_{0,0})$ and choose $k$ such
     that the point $\big(k,\val(f_{k,0})\big)$ is the second end
     point of the lower face, say $\Delta_0$, of the $t$-Newton polygon emanating from
     the vertex $(0,\alpha_0)$. By our assumptions we have necessarily
     \begin{equation}\label{eq:proof:1}
       k\leq r.
     \end{equation}
     If we now set
     \begin{displaymath}
       \omega_0=\frac{\alpha_0-\val(f_{k,0})}{k}>0,
     \end{displaymath}
     then $-\omega_0$ is the slope of the above mentioned
     $\Delta_0$. With this notation we can write the $t$-initial form
     $F_0$ with respect to $\omega_0$ as follows:
     \begin{displaymath}
       \tin_{\omega_0}(F_0)=
       \sum_{i=0}^n \big(t^{i\cdot \omega_0-\alpha_0} \cdot
       f_{i,0}\big)_{|t=0} \cdot y^i \in \C[y].
     \end{displaymath}
     In particular, the degree of the $t$-initial form with respect to
     $\omega_0$ is
     \begin{equation}\label{eq:proof:2}
       \deg\big(\tin_{\omega_0}(F_0)\big)=k,
     \end{equation}
     and the constant coefficient is $\lc(f_{0,0})\not=0$.
     Since $\C$ is algebraically closed we can choose a non-zero
     root of $\tin_{\omega_0}(F_0)$, 
     or more precisely 
     \begin{displaymath}
       \exists\;0\not=c_0\in\C\;\;\mbox{ and }\;\;0<r'\leq r
       \;\;\mbox{ s.t. }\;\;
       \tin_{\omega_0}(F_0)=(y-c_0)^{r'}\cdot g, \;\; g(c_0)\not=0,
     \end{displaymath}
     i.e.\ $c_0$ is a root of multiplicity $r'$ of
     $\tin_{\omega_0}(F_0)$ and $r'\leq r$ follows from
     \eqref{eq:proof:1} and \eqref{eq:proof:2}.

     Having found this root $c_0$ we transform $F_0$ into a new
     polynomial
     \begin{displaymath}
       F_1=t^{-\alpha_0}\cdot F_0\big(t^{\omega_0}\cdot (y+c_0)\big)=
       \sum_{i=0}^n f_{i,1}\cdot y^i\in\K[y].
     \end{displaymath}
     The coefficients $f_{i,1}$ of $F_1$ are just
     \begin{equation}\label{eq:coefF1}
       f_{i,1}=\sum_{j=i}^n f_{j,0}\cdot t^{j\cdot \omega_0-\alpha_0}
       \cdot \binom{j}{i}\cdot c_0^{j-i}
     \end{equation}
     for $i=0,\ldots,n$. In particular they have all non-negative
     valuation. But for the first $r'$ coefficients we
     know more, namely 
     \begin{displaymath}
       f_{i,1}(0)=\frac{1}{i!}\cdot \frac{\partial^i
         \tin_{\omega_0}(F_0)}{\partial y^i}(c_0)
       \left\{
         \begin{array}[m]{ll}
           =0, & \mbox{ if } 0\leq i\leq r'-1,\\
           \not=0, & \mbox{ if } i=r'.
         \end{array}
       \right.
     \end{displaymath}
     Note that we here use that the characteristic of the ground field
     is zero! It follows that the number $r'$ defined above plays the
     same role for $F_1$ as $r$ does for $F_0$, i.e.
     \begin{displaymath}
       r'=\min\{i\;|\;\val(f_{i,1})=0\},
     \end{displaymath}
     and as we have seen before $r'$ satisfies the inequalities
     \begin{displaymath}
       1\leq r'\leq k\leq r.
     \end{displaymath}
     If we find a root $y'\in\K$ of $F_1$ then
     $\overline{y}=t^{\omega_0}\cdot (y'+c_0)\in\K$ will be a root of
     $F_0$ with $\val(\overline{y})=\omega_0+\min\{0,\val(y')\}$. 

     In particular, if $f_{0,1}=0$ then $y'=0$ will do and we
     are done since then $\val(\overline{y})\geq \omega_0>0$. We may
     therefore assume that $f_{0,1}\not=0$, so that 
     $F_1$ satisfies the assumption \eqref{eq:cond:1},
     \eqref{eq:cond:2} and \eqref{eq:cond:3}. Thus, if $r'<r$ we
     are done by induction since $\deg(F_1)=n$, and we may assume
     therefore that
     \begin{equation}\label{eq:r=r'}
       r'=r.
     \end{equation}
     Note that this forces $k=r$, i.e.\ the $t$-Newton polygon of
     $F_0$ actually looks as follows,
     \begin{center}
       \begin{texdraw}
         \drawdim cm  \relunitscale 0.5 \arrowheadtype t:V
         \linewd 0.06  \lpatt (1 0)       
         \setgray 0.6
         \relunitscale 1.5
         \move (0 0) \avec (10 0) \move (0 0) \avec (0 6)
         \htext (1 2){$\Delta_0$}
         \htext (4.2 1.5) {$N(F_0)$}
         \setgray 0
         \move (0 4) \lvec (5 0)  \lvec (7 0) \lvec (9 1)
         \lvec (4.5 3.2) \lvec (1 4.5) \lvec (0 4)
         \move (0 4) \fcir f:0 r:0.1
         \move (0.5 4.25) \fcir f:0 r:0.1
         \move (1 4.5) \fcir f:0 r:0.1
         \move (1.5 3.5) \fcir f:0 r:0.1
         \move (2 2.8) \fcir f:0 r:0.1
         \move (2.5 2) \fcir f:0 r:0.1
         \move (3 3) \fcir f:0 r:0.1
         \move (3.5 2) \fcir f:0 r:0.1
         \move (4 1.75) \fcir f:0 r:0.1
         \move (4.5 3.2) \fcir f:0 r:0.1
         \move (5 0) \fcir f:0 r:0.1
         \move (5.5 1) \fcir f:0 r:0.1
         \move (6 1.3) \fcir f:0 r:0.1
         \move (6.5 1.3) \fcir f:0 r:0.1
         \move (7 0) \fcir f:0 r:0.1
         \move (7.5 0.5) \fcir f:0 r:0.1
         \move (8 1.3) \fcir f:0 r:0.1
         \move (8.5 0.75) \fcir f:0 r:0.1
         \move (9 1) \fcir f:0 r:0.1        
         \htext (-3.5 3.7){$\alpha_0=\val(f_{0,0})$}
         \htext (-3.4 -0.2){$\val(f_{k,0})=0$}
         \htext (4.3 -0.6){$r=k$}
         \htext (8.8 -0.6){$n$}
         \setgray 0.5
         \lpatt (0.1 0.4) 
         \move (9 0) \lvec (9 1)         
       \end{texdraw}
     \end{center}
     and the lower face $\Delta_0$ of the $t$-Newton polygon of $F_0$ emanating
     from $(0,\alpha_0)$ connects the two coordinate axes. 

     We now claim that in this situation we may indeed assume that 
     \begin{equation}
       \label{eq:cond:4}
       r=n.
     \end{equation}
     For this define the polynomial
     \begin{displaymath}
       F'=\frac{F_1}{f_{r,0}(0)}=\frac{t^{-\alpha_0}}{f_{r,0}(0)}\cdot
       F_0\Big(t^{\omega_0}\cdot (y+c_0)\Big)
       =y^r+\sum_{i=0}^n f'_i \cdot y^i\in \K[y],
     \end{displaymath}
     and note that $f'_i\in\K$ with $\val(f'_i)>0$ for
     all $i=0,\ldots,n$ since $c_0$ is a root of
     $\tin_{\omega_0}(F_0)$ of order $r$. Moreover, we consider the polynomial     
     \begin{displaymath}
       F''=y^r+\sum_{i=0}^n z_i\cdot y^i\in\C[[z_0,\ldots,z_n,y]]
     \end{displaymath}
     as a formal power series over $\C$, which then is \emph{regular of order
       $r$ in $y$} in the sense of the Weierstra{\ss}' Preparation
     Theorem (see e.g.\ \cite[Kap.\ I,\§ 4]{GR71} or 
     \cite[Thm.\ 3.2.4]{DP00}). The latter theorem thus implies that
     there exists a unit $U\in\C[[z_0,\ldots,z_n,y]]^*$ and a
     Weierstra{\ss} polynomial $P=y^r+\sum_{i=0}^{r-1} p_i\cdot y^i\in
     \C[[z_0,\ldots,z_n]][y]$ with $p_i(0)=0$ for all $i=0,\ldots,r-1$ such that
     \begin{displaymath}
       F''=U\cdot P.
     \end{displaymath}
     Since the $f'_i$ have strictly positive valuation we can
     substitute the $z_i$ by $f'_i$ in $U$ and $P$ to get 
     an invertible power series
     \begin{displaymath}
       U'=U(f'_0,\ldots,f'_n,y)\in\K[[y]]^*
     \end{displaymath}
     and a polynomial
     \begin{displaymath}
       P'=P(f'_0,\ldots,f'_n,y)=y^r+\sum_{i=0}^{r-1}
       p_i(f'_0,\ldots,f'_n)\cdot y^i\in \K[y]
     \end{displaymath}
     with
     \begin{equation}\label{eq:valcoef}
       \val\big(p_i(f'_0,\ldots,f'_n)\big)>0\;\;\;\mbox{ for all }i=0,\ldots,r-1.
     \end{equation}
     If $p_0(f'_0,\ldots,f'_n)=0$ then $F'(0)=U'(0)\cdot P'(0)=0$ and
     thus $\overline{y}=c_0\cdot t^{\omega_0}$ is a root of
     $F_0$ with $\val(\overline{y})=\omega_0>0$, so that we are done. Otherwise
     $P'$ satisfies the conditions \eqref{eq:cond:1}, 
     \eqref{eq:cond:2} and \eqref{eq:cond:3}. Thus, if $r<n$ then
     $r+r<r+n$ and by induction there exists a $y'\in\K$
     such that $P'(y')=0$ with  $\val(y')> 0$. 
     Since its valuation is positive we can substitute
     $y'$ into $U'$ and get an element
     $U'(y')\in\K$. But then
     $F'(y')=U'(y')\cdot P'(y')=0$, and
     hence 
     \begin{displaymath}
       \overline{y}=t^{\omega_0}\cdot (y'+c_0)\in\K
     \end{displaymath}
     is a root of $F_0$ with $\val(\overline{y})=\omega_0>0$. This
     proves the claim.

     We finally claim that  under the assumption \eqref{eq:cond:4} we
     can also assume
     \begin{equation}
       \label{eq:cond:5}
       f_{n,0}=1.
     \end{equation}
     For this note that $r=n$ implies that $\val(f_{n,0})=0$, i.e.\
     $f_{n,0}$ is a unit in the valuation ring of $\K$ and
     $\frac{1}{f_{n,0}}$ has valuation zero as well. Thus, replacing
     $F_0$ by $\frac{F_0}{f_{n,0}}$ does not effect the conditions
     \eqref{eq:cond:1}, \eqref{eq:cond:2}, \eqref{eq:cond:3}, or
     \eqref{eq:cond:4}. This shows the claim. 

     Note that if $F_0$ satisfies \eqref{eq:cond:4} and
     \eqref{eq:cond:5} then by \eqref{eq:coefF1} and \eqref{eq:r=r'}
     $F_1$ satisfies the corresponding conditions as well. Thus,
     applying the same procedure to $F_1$ and going on by recursion we
     may assume that we produce for each $\nu\in\N$ a polynomial
     \begin{displaymath}
       F_\nu=\sum_{i=0}^n f_{i,\nu}\cdot y^i\in\K[y]
     \end{displaymath}
     satisfying the corresponding versions of \eqref{eq:cond:1}, \eqref{eq:cond:2},
     \eqref{eq:cond:3}, \eqref{eq:cond:4}, and \eqref{eq:cond:5}, and
     we produce a root $0\not=c_\nu\in\C$ of $\tin_{\omega_\nu}(F_\nu)$ of order 
     \begin{displaymath}
       n=\min\{i\;|\;\val(f_{i,\nu})=0\}
     \end{displaymath}
     such that for $i=0,\ldots,n$
     \begin{equation}\label{eq:coefFnu}
       f_{i,\nu}=\sum_{j=i}^n f_{j,\nu-1}\cdot t^{j\cdot
         \omega_{n-1}-\alpha_{\nu-1}}\cdot \binom{j}{i}\cdot
       c_{\nu-1}^{j-i},
     \end{equation}
     where for each $\nu\in\N$
     \begin{displaymath}
       \alpha_\nu=\val(f_{0,\nu})>0
     \end{displaymath}
     and 
     \begin{equation}\label{eq:omega}
       \omega_{\nu}=\frac{\alpha_\nu}{n}>0
     \end{equation}
     is the negative of the slope of the lower face, say $\Delta_\nu$,
     of the $t$-Newton polygon of $F_\nu$ connecting the two coordinate axes by
     joining the points $(0,\alpha_\nu)$ and $(n,0)$.     
     Note that for this we use the fact that if at some
     point $F_\nu(0)=0$ then
     \begin{displaymath}
       \overline{y}=\sum_{i=0}^{\nu-1} c_i\cdot t^{\omega_0+\ldots+\omega_i}
       =t^{\omega_0}\cdot\Big(c_0+t^{\omega_1}\cdot\big(c_1+\cdots
       t^{\omega_{\nu-2}}\cdot(c_{\nu-2}+t^{\omega_{\nu-1}}\cdot c_{\nu-1})\cdots\big)\Big)
     \end{displaymath}
     is a root of $F_0$ of valuation $\omega_0>0$.

     That way we obviously construct a generalised Laurent series
     \begin{displaymath}
       \overline{y}=\sum_{i=0}^\infty c_i\cdot
       t^{\omega_0+\ldots+\omega_i}
     \end{displaymath}
     in the field $K$, and it remains to show that indeed
     $\overline{y}\in\K$ and $F_0(\overline{y})=0$.

     Let us first address the issue that $\overline{y}\in\K$.
     By \eqref{eq:omega} we know that $n\cdot\omega_\nu-\alpha_\nu=0$
     and $(n-1)\cdot \omega_\nu-\alpha_\nu=-\omega_\nu$, so that
     \eqref{eq:coefFnu} and the fact that $f_{n,\nu}=1$ imply that
     \begin{displaymath}
       f_{n-1,\nu+1}=f_{n-1,\nu}\cdot t^{-\omega_\nu}+n\cdot c_\nu,
     \end{displaymath}
     or equivalently
     \begin{displaymath}
       f_{n-1,\nu}=-n\cdot c_\nu\cdot t^{\omega_\nu}+t^{\omega_\nu}\cdot f_{n-1,\nu+1}.
     \end{displaymath}
     Doing a descending induction on $\nu$ we deduce
     \begin{displaymath}
       f_{n-1,0}=-n\cdot \sum_{i=0}^\nu c_i\cdot  t^{\omega_0+\ldots+\omega_i}+
       t^{\omega_0+\ldots+\omega_\nu}\cdot f_{n-1,\nu+1}.
     \end{displaymath}
     Since the valuation of $f_{n-1,\nu+1}$ is strictly positive it
     follows that the first $\nu+1$ summands of $f_{n-1,0}$ coincide
     with $-n\cdot \sum_{i=0}^\nu c_i\cdot
     t^{\omega_0+\ldots+\omega_i}$, and since this holds for each
     $\nu\in\N$ we necessarily have
     \begin{displaymath}
       \overline{y}=-\frac{f_{n-1,0}}{n}\in\K.
     \end{displaymath}
     Note that we here again use that the characteristic of $\C$ is zero.
    
     In order to show that $F_0(\overline{y})=0$ we set
     \begin{displaymath}
       \overline{y}_\nu=\sum_{i=\nu}^\infty c_i\cdot t^{\omega_\nu+\ldots+\omega_i},
     \end{displaymath}
     so that
     \begin{displaymath}
       F_\nu(\overline{y}_\nu)=t^{\alpha_\nu}\cdot F_{\nu+1}(\overline{y}_{\nu+1}).
     \end{displaymath}
     But this equation together with a simple induction shows that
     \begin{displaymath}
       \val\big(F_0(\overline{y})\big)=\sum_{i=0}^\nu \alpha_i +
       \val\big(F_{\nu+1}(\overline{y}_{\nu+1})\big)
     \end{displaymath}
     for each $\nu\in\N$. Since the coefficients of $F_{\nu+1}$ all
     have non-negative valuation and since
     $\val(\overline{y}_{\nu+1})>0$ it follows that the last summand
     is non-negative, and therefore
     \begin{displaymath}
       \val\big(F_0(\overline{y})\big)\geq n\cdot \sum_{i=0}^\nu
       \omega_i \stackrel{\nu\rightarrow\infty}{\longrightarrow}\infty.
     \end{displaymath}
     This implies that $F_0(\overline{y})=0$ and finishes the proof.
   \end{proof}

   \begin{remark}
     \begin{enumerate}
     \item If we replace the base field $\C$ in the definition of $\K$
       by any algebraically closed field of characteristic zero, then
       the Theorem \ref{thm:main} holds with the same proof.
     \item If we replace the base field $\C$ by a field of positive
       characteristic $p$ Theorem \ref{thm:main} holds no longer. The
       Artin-Schreyer polynomial 
       \begin{displaymath}
         F=y^p-y-\frac{1}{t}
       \end{displaymath}
       has the roots
       \begin{displaymath}
         \overline{y}=-k+\sum_{i=1}^\infty t^{-\frac{1}{p^i}}\in K\setminus\K,
       \end{displaymath}
       for $k=0,\ldots,p-1$ (see \cite{Abh56}). The algebraic closure
       of the quotient field of the formal power series ring, i.e. the
       analogue of $\C\{\{t\}\}$ in this situation is studied in
       \cite{Ked01}. Since already the square of any of the roots of
       the above polynomial $F$ has a support which can no longer be
       written as a single ascending sequence, there is no nice
       substitution of the analogue of $K$ for tropical geometry in
       positive characteristic.
     \item The valuation ring 
       \begin{displaymath}
         R_{\val}=\{f\in\K\;|\;\val(f)\geq 0\}
       \end{displaymath}
       of $\K$ is non-noetherian local ring of dimension one with maximal
       ideal $\m=\langle t^\alpha\;|\;\alpha\in\R_{>0}\rangle$.
     \item The field extension $\C\subset\K$ has infinite
       transcendence degree, since whenever $\alpha_1,\ldots,\alpha_n$
       are algebraically independent over $\Q$ then
       $t^{\alpha_1},\ldots,t^{\alpha_n}$ are algebraically
       independent over $\C$.
     \end{enumerate}
   \end{remark}

   \begin{definition}
     The valuation on $\K$ induces via the exponential map the norm
     \begin{displaymath}
       |\cdot|:\K\longrightarrow\R:f\mapsto\exp\big(-\val(f)\big)
     \end{displaymath}
     on $\K$, where we use the convention that $\exp(-\infty)=0$. It
     satisfies the strong triangle inequality
     \begin{displaymath}
       |f+g|\leq\max\{|f|,|g|\}.
     \end{displaymath}
     As usual we call a sequence $(f_n)_{n\in\N}$ in $\K$ a
     \emph{Cauchy sequence} with respect to $|\cdot|$ if for all
     $\varepsilon>0$ there exists an $N(\varepsilon)\in\N$ such that
     $|f_n-f_m|<\varepsilon$ for all $n,m\geq N(\varepsilon)$. And we
     call a sequence $(f_n)_{n\in\N}$ in $\K$ \emph{convergent} with respect
     to $|\cdot|$ if there is an $f\in\K$ such that for all
     $\varepsilon>0$ there exists an $N(\varepsilon)\in\N$ such that  
     $|f_n-f|<\varepsilon$ for all $n\geq N(\varepsilon)$. 
   \end{definition}

   Compared with the field $\C\{\{t\}\}$ of Puiseux series the field
   $\K$ has the advantage that it is complete with respect to the
   norm induced by the valuation.

   \begin{proposition}\label{prop:complete}
     $(\K,|\cdot|)$ is complete, i.e.\ every Cauchy sequence is convergent.
   \end{proposition}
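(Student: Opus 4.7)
The plan is to build the limit coefficient-by-coefficient and then verify that it actually lies in $\K$. Let $(f_n)_{n\in\N}$ be a Cauchy sequence in $\K$. For each fixed $\alpha\in\R$, the sequence of complex numbers $\bigl(f_n(\alpha)\bigr)_{n\in\N}$ is eventually constant: choosing $N$ so that $|f_n-f_m|<\exp(-\alpha)$, i.e.\ $\val(f_n-f_m)>\alpha$, for all $n,m\geq N$ forces $(f_n-f_m)(\beta)=0$ for every $\beta\leq\alpha$, and in particular $f_n(\alpha)=f_N(\alpha)$ for all $n\geq N$. I would therefore define $f:\R\to\C$ by $f(\alpha)=\lim_{n\to\infty}f_n(\alpha)$, which exists in $\C$ by this stabilisation.

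The only non-trivial point is to check that $f\in\K$, i.e.\ that its support $A=\{\alpha\in\R\;|\;f(\alpha)\neq 0\}$ belongs to $\M$. By Definition \ref{def:K} this amounts to showing that $A\cap(-\infty,M]$ is finite for every real $M$. Fix such an $M$, and choose $N$ with $\val(f_n-f_m)>M$ for all $n,m\geq N$. The stabilisation argument of the previous paragraph then gives $f(\alpha)=f_N(\alpha)$ for every $\alpha\leq M$, so
\begin{displaymath}
A\cap(-\infty,M]\;=\;\tsupp(f_N)\cap(-\infty,M],
\end{displaymath}
and the right hand side is finite because $\tsupp(f_N)\in\M$. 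Hence $A\in\M$ and $f\in\K$.

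Finally, convergence $f_n\to f$ falls out of the same coefficient-wise comparison: given $\varepsilon>0$, set $\delta=-\log\varepsilon$ and pick $N$ with $\val(f_n-f_m)>\delta$ for $n,m\geq N$; then $f_n(\alpha)=f(\alpha)$ for all $\alpha\leq\delta$ and $n\geq N$, so $\val(f_n-f)\geq \delta$ and thus $|f_n-f|\leq\varepsilon$. The main obstacle throughout is the middle step: one must rule out that the pointwise limit is a genuine Hahn-type series in $K\setminus\K$ whose support, although well-ordered, fails to be a \emph{smiub}-sequence. It is exactly the defining feature of $\M$---that each of its elements meets every half-line $(-\infty,M]$ in only finitely many points---which allows the limit to inherit the required property from a single sufficiently late term $f_N$ of the Cauchy sequence.
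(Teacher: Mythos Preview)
Your proof is correct and follows the same pointwise-stabilisation approach as the paper's proof. You are in fact more careful than the paper, which does not spell out why the limit lies in $\K$; one minor notational quibble is that $\tsupp$ in this paper denotes the $t$-support of a polynomial in $\K[y]$ (Notation~\ref{not:newton}), so you should say ``the support of $f_N$'' rather than $\tsupp(f_N)$.
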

   \begin{proof}
     Let $(f_n)_{n\in\N}$ be a Cauchy sequence. Given any positive
     integer $M$ we set $\varepsilon_M=\exp(-M)>0$. Thus there is an
     $N(\varepsilon_M)\in\N$ such that 
     \begin{displaymath}
       \exp(-M)=\varepsilon_M>|f_n-f_m|=\exp\big(-\val(f_n-f_m)\big)
     \end{displaymath}
     for all $n,m\geq N(\varepsilon_M)$, or equivalently
     \begin{displaymath}
       \val(f_n-f_m)>M.
     \end{displaymath}
     This implies that $f_n(\alpha)=f_m(\alpha)$ for all $\alpha\leq
     M$ and for all $n,m\geq N(\varepsilon_M)$. Without loss of
     generality we may assume that $N(\varepsilon_M)\geq
     N(\varepsilon_{M'})$ whenever $M\geq M'$. We may therefore
     define a function $f:\R\longrightarrow \C$ by
     \begin{displaymath}
       f(\alpha)=f_{N(\varepsilon_M)}(\alpha)
     \end{displaymath}
     if $\alpha\geq M$, and obviously $(f_n)_{n\in\N}$ converges to this
     function $f$. 
   \end{proof}

   \begin{remark}
     \begin{enumerate}
     \item The statement of Proposition \ref{prop:complete} remains
       true if we replace in the definition of $\K$ the field $\C$ by
       any other field. It is independent of the characteristic.
     \item If we replace in the definition of $\K$ the domain $\R$ 
       of the elements in $\K$ by $\Q$ we get the completion of
       $\C\{\{t\}\}$ with respect to the norm induced by the
       valuation. With the same proof as in Theorem \ref{thm:main}
       this field is algebraically closed. But note that the value
       group is still only $Q$.
     \end{enumerate}
   \end{remark}


\begin{thebibliography}{{De}P00}

\bibitem[Abh56]{Abh56}
Shreeram Abhyankar, \emph{Two notes on formal power series}, Proc.\ Amer.\
  Math.\ Soc. \textbf{7} (1956), 903--905.

\bibitem[BiG84]{BG84}
Robert Bieri and J.R.J. Groves, \emph{The geometry of the set of characters
  induced by valuations}, J.\ Reine Angew.\ Math. \textbf{347} (1984),
  168--195.

\bibitem[Böh07]{Boe07}
Janko Böhm, \emph{Mirror symmetry and tropical geometry}, Ph.D. thesis,
  Universit\"at des Saarlandes, 2007.

\bibitem[{De}P00]{DP00}
Theo {De Jong} and Gerhard Pfister, \emph{Local analytic geometry}, Vieweg,
  2000.

\bibitem[Dra06]{Dra06}
Jan Draisma, \emph{A tropical approach to secant dimensions}, preprint
  math.AG/0605345, 2006.

\bibitem[EKL06]{EKL06}
Manfred Einsiedler, Mikhail Kapranov, and Douglas Lind, \emph{Non-archimedean
  amoebas and tropical varieties}, J.\ Reine Angew.\ Math. \textbf{601} (2006),
  139--157.

\bibitem[Gat06]{Gat06}
Andreas Gathmann, \emph{Tropical algebraic geometry}, Jahresbericht der DMV
  \textbf{108} (2006), no.~1, 3--32.

\bibitem[GrR71]{GR71}
Hans Grauert and Reinhold Remmert, \emph{Analytische {Stellenalgebren}},
  Grundlehren der mathematischen Wissenschaften in Einzeldarstellungen, vol.
  176, Springer, 1971.

\bibitem[JMM07]{JMM07}
Anders~Needergard Jensen, Hannah Markwig, and Thomas Markwig, \emph{An
  algorithm for lifting points in a tropical variety}, arXiv:0705.2441, 2007.

\bibitem[Ked01]{Ked01}
Kiran~S. Kedlaya, \emph{The algebraic closure of the power series field in
  positive characteristic}, Proc.\ Amer.\ Math.\ Soc. \textbf{129} (2001),
  no.~12, 3461--3470.

\bibitem[KMM07]{KMM07}
Eric Katz, Hannah Markwig, and Thomas Markwig, \emph{The $j$-invariant of a
  plane tropical cubic}, Preprint, 2007.

\bibitem[Mik06]{Mik06}
Grigory Mikhalkin, \emph{Tropical geometry and its applications}, International
  Congress of Mathematicians, vol.~II, Eur.\ Math.\ Soc., 2006, pp.~827--852.

\bibitem[Pay07]{Pay07}
Sam Payne, \emph{Fibers of tropicalization}, math.AG/0705.1732v1, 2007.

\bibitem[Ray74]{Ray74}
Francis~J.\ Rayner, \emph{Algebraically closed fields analogous to fields of
  {Puiseux} series}, J.\ London Math.\ Soc.\ \textbf{8} (1974), no.~2,
  504--506.

\bibitem[Rib92]{Rib92}
Paulo Ribenboim, \emph{Fields: Algebraically closed and others}, Manuscripta
  Math. \textbf{75} (1992), 115--150.

\bibitem[Shu06]{Shu06}
Eugenii Shustin, \emph{A tropical calculation of {Welschinger} invariants of
  real toric {Del Pezzo} srufaces}, J.\ Algebraic Geom. \textbf{15} (2006),
  no.~2, 285--322.

\bibitem[Spe05]{Spe05}
David Speyer, \emph{Tropical geometry}, Ph.D. thesis, University of California,
  Berkeley, 2005.

\bibitem[SpS04]{SS04a}
David Speyer and Bernd Sturmfels, \emph{The tropical {Grassmannian}}, Adv.
  Geom. \textbf{4} (2004), 389--411.

\bibitem[Tab05]{Tab05a}
Luis~Felipe Tabera, \emph{Tropical constructive {Pappus'} theorem}, Int.\
  Math.\ Res.\ Not. (2005), no.~39, 2373--2389.

\end{thebibliography}
   
\providecommand{\bysame}{\leavevmode\hbox to3em{\hrulefill}\thinspace}

\end{document}